\newtheorem{theorem}{Theorem}[section]
\newtheorem{lemma}[theorem]{Lemma}
\theoremstyle{definition}
\theoremstyle{remark}
\newcommand{\sbu}{\sigma^{**}}
\newcommand{\Mod}[1]{\ \left(\mathrm{mod}\ #1\right)}
\newcommand{\seqnum}[1]{\href{https://oeis.org/#1}{\rm \underline{#1}}}
\begin{document}
\title{Determining all biunitary triperfect numbers of a certain form}
\author{Tomohiro Yamada}
\keywords{Odd perfect number; biunitary superperfect number; unitary divisor; biunitary divisor; the sum of divisors}
\subjclass{11A05, 11A25}
\address{Center for Japanese language and culture, Osaka University,
562-8678, 3-5-10, Sembahigashi, Minoo, Osaka, Japan}
\email{tyamada1093@gmail.com}

\date{}
\maketitle
\title{}

\begin{abstract}
We shall show that $2160$ is the only biunitary triperfect number divisible by $27=3^3$.
\end{abstract}

\section{Introduction}\label{intro}
As usual, we let $\sigma(N)$ and $\omega(N)$ denote respectively
the sum of divisors and the number of distinct prime factors of a positive integer $N$.
We call a positive integer $N$ to be perfect if $\sigma(N)=2N$.
It is a well-known unsolved problem whether or not
an odd perfect number exists.
Interest to this problem has produced many analogous concepts
and problems concerning divisors of an integer.
For example, it is also unknown whether or not there exists an odd multiperfect number,
an integer dividing the sum of its divisors.
We call a positive integer $N$ to be $k$-perfect if $\sigma(N)=kN$.
Ordinary perfect numbers are $2$-perfect numbers
and multiperfect numbers are $k$-perfect numbers for some integer $k$.
A $3$-perfect number such as $120$, $672$, and $523776$ are also called to be {\it triperfect},
which are given in \seqnum{A005820}.

On the other hand, some special classes of divisors have also been studied in several papers.
One of them is the class of unitary divisors defined by Cohen \cite{CohE},
who called a divisor $d$ of $N$ to be a {\it unitary divisor} if $\gcd(d, N/d)=1$.
Wall \cite{Wal1} introduced the notion of {\it biunitary divisors},
a divisor $d$ of a positive integer $N$ satisfying $\gcd_1(d, N/d)=1$,
where $\gcd_1(a, b)$ is the greatest common unitary divisor of two positive integers $a$ and $b$.

According to Cohen \cite{CohE} and Wall \cite{Wal1} respectively,
we let $\sigma^*(N)$ and $\sbu(N)$ denote the sum of unitary and biunitary divisors of $N$.
Moreover, we write $d\mid\mid N$ if $d$ is a unitary divisor of $N$.
Hence, for a prime $p$, we have $p^e\mid\mid N$ if $p$ divides $N$ exactly $e$ times.
Replacing $\sigma$ by $\sigma^*$, Subbarao and Warren \cite{SW}
introduced the notion of {\it unitary perfect numbers}
by calling $N$ to be unitary perfect if $\sigma^*(N)=2N$.
They proved that there are no odd unitary perfect number and $6$, $60$, $90$, and $87360$
are the first four unitary perfect numbers.
Nine years later, the fifth unitary perfect number was found by Wall \cite{Wal2},
but no further instance has been found.
Subbarao \cite{Sub} conjectured that there are only finitely many
unitary perfect numbers.

Similarly, a positive integers $N$ is called {\it biunitary perfect} if $\sbu(N)=2N$.
Wall \cite{Wal1} showed that $6$, $60$, and $90$, the first three unitary perfect numbers,
are the only biunitary perfect numbers.

Combining the notion of multiperfect numbers and biunitary perfect numbers,
Hagis \cite{Hag} introduced the notion of biunitary multiperfect numbers,
integers $N$ such that $\sbu(N)=kN$ for some integer $k$ and proved that there is
no odd biunitary multiperfect number.
Smallest instances for $\sbu(N)=kN$ with $k\geq 3$ are $N=120, 672, 2160, \ldots$ with $k=3$ 
and $N=30240, 1028160, 6168960, \ldots$ with $k=4$.
All biunitary multiperfect numbers below $4.66\times 10^{12}$ as well as many larger ones
are given in \seqnum{A189000}.

Now we can call an integer $N$ satisfying $\sbu(N)=kN$ to be biunitary $k$-perfect
and biunitary $3$-perfect numbers to be biunitary triperfect.
Haukkanen and Sitaramaiah \cite{HS} determined all biunitary triperfect numbers $N$
such that $2^a\mid\mid N$ with $1\leq a\leq 6$ or $a=8$,
and such ones with $a=7$ under several conditions.
In this paper, we shall determine all biunitary triperfect numbers divisible by $27=3^3$.

\begin{theorem}\label{th1}
There exists the only one biunitary triperfect number $N=2160$ divisible by $3^3$.
\end{theorem}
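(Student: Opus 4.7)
The plan is to use the multiplicativity of $\sbu$ together with a careful $p$-adic valuation analysis. Write $N = 3^a M$ with $a \ge 3$ and $\gcd(M,3) = 1$. Since $\sbu$ is multiplicative and $\sbu(3^a) \equiv 1 \pmod{3}$, the equation $\sbu(N) = 3N$ rewrites as
\[
\frac{\sbu(M)}{M} \;=\; \frac{3^{a+1}}{\sbu(3^a)},
\]
with the right-hand side already in lowest terms; in particular $\sbu(3^a) \mid M$. Tabulating $\sbu(3^3) = 40 = 2^3\cdot 5$, $\sbu(3^4) = 112 = 2^4\cdot 7$, $\sbu(3^5) = 364 = 2^2\cdot 7\cdot 13$, $\sbu(3^6) = 1066 = 2\cdot 13\cdot 41$, $\sbu(3^7) = 3280 = 2^4\cdot 5\cdot 41$, and so on, shows that $M$ is forced to contain a growing list of small primes as $a$ increases.

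The first step is to bound $a$. Since $\sbu(p^e)/p^e < p/(p-1)$ for every prime power, one has $\sbu(3^a)/3^a < 3/2$, and hence $\sbu(M)/M > 2$. On the other hand, for large $a$ the divisibility $\sbu(3^a) \mid M$ compels many distinct primes to divide $M$; each prime $p$ contributes at most $p/(p-1)$ to $\sbu(M)/M$, and comparing the required value $3^{a+1}/\sbu(3^a)$ with the maximum ratio achievable by an $M$ containing these forced primes (at their most favourable exponents) should yield a contradiction for all but a few small $a$. I expect $a = 3$ to be the only surviving case, possibly together with a short list of additional values that must be handled individually.

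For $a = 3$ the equation reads $\sbu(M)/M = 81/40$, so $2^3\cdot 5 \mid M$. Writing $M = 2^b\cdot 5^c\cdot L$ with $b \ge 3$, $c \ge 1$, and $\gcd(L,30) = 1$, the equation factors as
\[
\frac{\sbu(2^b)}{2^b} \cdot \frac{\sbu(5^c)}{5^c} \cdot \frac{\sbu(L)}{L} \;=\; \frac{81}{40}.
\]
Each factor has, in lowest terms, a denominator that is a pure power of the corresponding prime, so comparing $2$-adic valuations of both sides restricts $b$ to a short list, and comparing $5$-adic valuations does the same for $c$. In the successful branch $b = 4$ the factor $\sbu(2^4)/2^4 = 27/16$ absorbs the entire $2$-power of the denominator, and the residual equation $\sbu(5^c)\sbu(L)/(5^c L) = 6/5$ then forces $c = 1$ and $L = 1$, yielding $N = 2^4\cdot 3^3\cdot 5 = 2160$.

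The principal difficulty will lie in the bookkeeping for the bound on $a$ and, within each admissible $a$, in the secondary case splits on $b$, $c$, and on the exponents of any further primes (such as $7$, $13$, or $41$) forced into $M$. I anticipate that a handful of small lemmas of the form ``if $p^e \mid\mid M$ with $e \ge 2$, then some further small prime $q$ is forced into $M$ in violation of a running coprimality,'' combined with direct verification on small exponent combinations, will suffice to terminate the sub-case analysis with only finitely many explicit configurations to check.
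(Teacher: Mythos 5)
Your framework---write $N=3^aM$, use multiplicativity to get $\sbu(M)/M=3^{a+1}/\sbu(3^a)$ with $\sbu(3^a)\mid M$, and then compare ratios and $2$-adic valuations---is the same skeleton the paper uses (the paper's Lemma~\ref{a} is exactly your valuation comparison, and the forced divisibility $\sbu(3^f)\mid 3N$ is its starting point). The $a=3$ endgame you sketch is essentially sound. But there is a genuine gap at the step you describe as ``the first step is to bound $a$.'' The comparison you propose does not close: the required value $3^{a+1}/\sbu(3^a)$ is barely above $2$, the forced prime $2$ contributes a factor strictly below $2$, and the remaining forced primes of $\sbu(3^a)$ are typically few and large, so the \emph{minimum} ratio an admissible $M$ can achieve sits well below the target while the \emph{maximum} sits well above it. Concretely, for $a=7$ one has $\sbu(3^7)=2^4\cdot 5\cdot 41$, the target is $6561/3280\approx 2.0003$, and the forced primes $2,5,41$ at their most favourable exponents give only $(27/16)(26/25)(1682/1681)\approx 1.76$; no contradiction in either direction. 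The same happens for infinitely many $a$, so ``all but a few small $a$'' cannot be dispatched this way.

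What is missing is the mechanism the paper builds in Lemmas~\ref{lm21}--\ref{lm23}: Bang/Zsigmondy-type results guaranteeing that $\sbu(3^f)$ and $\sbu(2^e)$ each acquire \emph{new} prime factors of controlled size, so that when the exponents of $2$ and $3$ are both large one can manufacture enough coprime contributions to force either $\sbu(N)/N>3$ or a $2$-adic overflow. Your plan also inverts the order of quantifiers in a way that hides the difficulty: you want to bound $a$ first and only then restrict $b$, but the valuation bound on the number of odd prime factors of $M$ depends on $b$, and the elimination of large $a$ depends on that bound; the two exponents must be played off against each other simultaneously (this is why the paper's residual cases $e=8$, $e=12$, and the entire $e=4$, $5\leq f\leq 17$ range each need bespoke arguments, including the nontrivial facts that $5\nmid N$ when $e=4$ and that $\sbu(3^f)$ has two prime factors $\geq 127$ for $f\geq 18$). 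As written, your proposal is a plausible programme whose central claim is asserted rather than proved, and the assertion fails for the stated reason.
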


Our proof is completely elementary.
If $\sbu(N)=3N$ with a factorization $N=\prod_i p_i^{e_i}$, then
a prime $p\neq 3$ dividing $\sbu(p_i^{e_i})$ must divide $N$ since $\sbu(p_i^{e_i})\mid \sbu(N)=3N$.
We see that $\sbu(2^e 3^f)/(2^e 3^f)$ tends to three as $e$ and $f$ grows
and for $e$ and $f$ large, $\sbu(2^e)$ and $\sbu(3^f)$ produce new prime factors of $N$.
In many cases, this causes $\sbu(N)/N>3$, which is a contradiction.
In other cases, we are led to a contradiction that $p^{f+1}\mid N$ under the assumption that
$p$ divides $N$ exactly $f$ times or $\sbu(N)/N<3$.

Based on our theorem and known biunitary multiperfect numbers, we can pose the following problems:
\begin{itemize}
\item For each integer $k\geq 3$, are there infinitely or only finitely many integers $N$ for which $\sbu(N)=kN$?
\item For any prime power $p^g\neq 1$, does there exist at least one $N$
for which $\sbu(N)=kN$ and $p^g\mid\mid N$?
\item For any integer $d>2$, does there exist at least one integer $N$
for which $\sbu(N)=kN$ and $\gcd(d, N)=1$?
\end{itemize}

\section{Preliminary Lemmas}\label{lemmas}
In this section, we shall give several preliminary lemmas concerning the sum of
biunitary divisors used to prove our main theorems.

Before all, we introduce two basic facts from \cite{Wal1}.
The sum of biunitary divisors function $\sbu$ is multiplicative.
Moreover, if $p$ is a prime and $e$ is a positive integer, then
\begin{equation}
\sbu(p^e)= \begin{cases}
p^e+p^{e-1}+\cdots +1=\frac{p^{e+1}-1}{p-1}, & \text{if $e$ is odd;} \\
\frac{p^{e+1}-1}{p-1}-p^{e/2}=\frac{(p^{e/2}-1)(p^{e/2+1}+1)}{p-1}, & \text{if $e$ is even.}
\end{cases}
\end{equation}
We note that, putting $e=2s-1-\delta$ with $\delta\in\{0, 1\}$, this can be represented by the single formula
\begin{equation}\label{eq21}
\sbu(p^e)=\frac{(p^{s-\delta}-1)(p^s+1)}{p-1}.
\end{equation}

From these facts, we can deduce the following lemmas almost immediately.

\begin{lemma}\label{a}
Let $n$ be a positive integer.
Then, $\sbu(n)$ is odd if and only if $n$ is a power of $2$ (including $1$).
More exactly, $\sbu(n)$ is divisible by $2$ at least $\omega(n)$ times if $n$ is odd
and at least $\omega(n)-1$ times if $n$ is even.
\end{lemma}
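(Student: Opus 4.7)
The plan is to reduce both statements to a parity analysis of $\sbu(p^e)$ for prime powers, then invoke the multiplicativity of $\sbu$ recorded at the top of this section. The key dichotomy is between $p=2$ and $p$ odd, and in each case the explicit formula (\ref{eq21}) (or rather its two constituent forms) is directly applicable.

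For $p=2$, I would show that $\sbu(2^e)$ is odd for every $e\geq 0$. The case $e=0$ is trivial since $\sbu(1)=1$. If $e$ is odd, the first branch of the formula gives $\sbu(2^e)=2^{e+1}-1$, which is manifestly odd. If $e$ is even and positive, the factored second branch gives $\sbu(2^e)=(2^{e/2}-1)(2^{e/2+1}+1)$, a product of two odd integers, hence odd.

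For an odd prime $p$ and $e\geq 1$, I would instead use a direct counting argument rather than (\ref{eq21}). Recall that $p^k$ is a biunitary divisor of $p^e$ exactly when $2k\neq e$, so $\sbu(p^e)$ is a sum of odd numbers with $e+1$ terms if $e$ is odd and $e$ terms if $e$ is even; in either case the number of summands is even, so the sum is even. Hence $\sbu(p^e)$ acquires at least one factor of $2$ for every odd prime power $p^e\mid\mid n$.

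Combining via multiplicativity, $\sbu(n)=\prod_i \sbu(p_i^{e_i})$ is odd if and only if every factor is, and by the above this occurs precisely when no $p_i$ is odd, i.e.\ when $n$ is a power of $2$ (including $1$). For the quantitative strengthening, each odd prime power in the factorization of $n$ contributes at least one factor of $2$ to $\sbu(n)$: if $n$ is odd all $\omega(n)$ prime factors are odd, giving $2^{\omega(n)}\mid\sbu(n)$, while if $n$ is even exactly one prime factor (namely $2$) contributes nothing, leaving at least $\omega(n)-1$ factors of $2$. I do not anticipate any real obstacle here; the only care needed is to handle both parities of the exponent $e$ in the prime-power analysis uniformly.
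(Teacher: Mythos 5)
Your proof is correct and follows essentially the same route as the paper's: both reduce the claim to showing that $\sbu(p^e)$ is odd exactly when $p=2$, and then conclude via the multiplicativity of $\sbu$. The only (harmless) difference is that for odd $p$ you argue by counting that the number of biunitary divisors of $p^e$ is even, whereas the paper reads the evenness directly off the closed formula \eqref{eq21} through the even factor $p^s+1$.
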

\begin{proof}
Whether $e$ is even or odd, $\sbu(p^e)$ is odd if and only if $p=2$
by \eqref{eq21}.
Factoring $n=2^e \prod_{i=1}^r p_i^{e_i}$
into distinct odd primes $p_1, p_2, \ldots, p_r$ with $e\geq 0$ and $e_1, e_2, \ldots, e_r>0$,
each $\sbu(p_i^{e_i})$ is even.
Hence, $\sbu(n)=\sbu(2^e)\prod_{i=1}^r \sbu(p_i^{e_i})$ is divisible by $2$ at least $r$ times,
where $r=\omega(n)$ if $n$ is odd and $\omega(n)-1$ if $n$ is even.
\end{proof}

\begin{lemma}\label{b}
For any prime $p$ and any positive integer $e$, $\sbu(p^e)/p^e\geq 1+1/p^2$.
Moreover, $\sbu(p^e)/p^e\geq 1+1/p$ unless $e=2$
and $\sbu(p^e)/p^e\geq (1+1/p)(1+1/p^3)$ if $e\geq 3$.
More generally, for any positive integers $m$ and $e\geq 2m-1$,
we have $\sbu(p^e)/p^e\geq \sbu(p^{2m})/p^{2m}$
and, unless $e=2m$, $\sbu(p^e)/p^e\geq 1+1/p+\cdots +1/p^m$.
\end{lemma}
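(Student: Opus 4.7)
My plan is to rewrite formula~\eqref{eq21} in a uniform series form and prove the general statement by direct term-by-term comparison, deducing the three explicit bounds as special cases.

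The key preparatory step is to observe that \eqref{eq21} can be rewritten uniformly as
\begin{equation*}
\frac{\sbu(p^e)}{p^e}=\sum_{j=0}^{e} p^{-j}-\varepsilon\,p^{-e/2},
\end{equation*}
where $\varepsilon=0$ if $e$ is odd and $\varepsilon=1$ if $e$ is even; this follows immediately by dividing the two cases of the definition by $p^e$ and reindexing the geometric sum. In particular $\sbu(p^{2m})/p^{2m}=\sum_{j=0}^{2m}p^{-j}-p^{-m}$, while the target bound $1+1/p+\cdots+1/p^m$ is the truncation $\sum_{j=0}^{m}p^{-j}$.

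To prove $\sbu(p^e)/p^e\geq\sbu(p^{2m})/p^{2m}$ for $e\geq 2m-1$, I would split on the parity of $e$. If $e=2m-1$, the difference is $p^{-m}-p^{-2m}=p^{-m}(1-p^{-m})>0$. If $e$ is odd with $e\geq 2m+1$, the series for $\sbu(p^e)/p^e$ dominates $\sum_{j=0}^{2m}p^{-j}$ term by term and has no subtracted correction. If $e=2m$, equality holds. If $e=2t$ is even with $t\geq m+1$, the difference equals $\sum_{j=2m+1}^{2t}p^{-j}+(p^{-m}-p^{-t})$, which is positive since $t>m$. For the second assertion $\sbu(p^e)/p^e\geq\sum_{j=0}^{m}p^{-j}$ whenever $e\geq 2m-1$ and $e\neq 2m$, the same case split becomes even simpler: for odd $e$ the full sum already contains the truncation since $e\geq 2m-1\geq m$; for even $e=2t$ with $t\geq m+1$ the subtracted term $p^{-t}$ has index beyond $m$, so every term of index $\leq m$ survives untouched.

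The three explicit bounds then follow by specialization. Taking $m=1$ gives both $\sbu(p^e)/p^e\geq\sbu(p^2)/p^2=1+1/p^2$ for all $e\geq 1$, and $\sbu(p^e)/p^e\geq 1+1/p$ whenever $e\neq 2$. Taking $m=2$ and noting the factorization $\sbu(p^4)/p^4=(p+1)(p^3+1)/p^4=(1+1/p)(1+1/p^3)$, which is immediate from the even case of \eqref{eq21}, gives $\sbu(p^e)/p^e\geq(1+1/p)(1+1/p^3)$ for $e\geq 3$. There is no real obstacle here: the whole lemma reduces to one algebraic identity (the uniform series form) followed by disciplined case tracking by the parity of $e$ relative to $2m$.
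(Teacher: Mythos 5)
Your proof is correct and takes essentially the same route as the paper: both expand $\sbu(p^e)/p^e$ as the geometric sum $\sum_{j=0}^{e}p^{-j}$ minus the middle term $p^{-e/2}$ when $e$ is even, and compare termwise according to the parity of $e$ relative to $2m$. The only cosmetic difference is that the paper deduces the minimality at $e=2m$ from the bound $\sbu(p^{2m})/p^{2m}<1+1/p+\cdots+1/p^m$ rather than by your direct case-by-case subtraction, which changes nothing of substance.
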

\begin{proof}
If $e\geq 2m-1$ and $e$ is odd, then $p^e, p^{e-1}, \ldots, p, 1$ are biunitary divisors of $p^e$.
If $e>2m$ and $e$ is even, then $p^e, p^{e-1}, \ldots, p^{e-m}$ are biunitary divisors of $p^e$ since $e-m>e/2$.
Hence, if $e\geq 2m-1$ and $e\neq 2m$, then $\sbu(p^e)=p^e+p^{e-1}+\cdots +1>p^e+\cdots +p^{e-m}=p^e(1+1/p+\cdots +1/p^m)$.
Since $\sbu(p^{2m})/p^{2m}<1+1/p+\cdots +1/p^m$, $\sbu(p^e)/p^e$ with $e\geq 2m-1$ takes its minimum value at $e=2m$.
\end{proof}

Now we shall quote the following lemma of Bang \cite{Ban}, which has been rediscovered
(and extended into numbers of the form $a^n-b^n$)
by many authors such as Zsigmondy \cite{Zsi}, Dickson \cite{Dic} and Kanold \cite{Kan}.
See also Theorem 6.4A.1 in Shapiro \cite{Sha}.

\begin{lemma}\label{lm21}
If $a\geq 2$ and $n\geq 2$ are integers, then $a^n-1$ has a prime factor
which does not divide $a^m-1$ for any $m<n$, unless $(a, n)=(2, 1), (2, 6)$
or $n=2$ and $a+1$ is a power of $2$.
Furthermore, such a prime factor must be congruent to $1$ modulo $n$.
\end{lemma}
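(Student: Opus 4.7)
The standard approach is via cyclotomic polynomials. Let $\Phi_d(x) \in \mathbb{Z}[x]$ denote the $d$-th cyclotomic polynomial, so that $x^n - 1 = \prod_{d \mid n} \Phi_d(x)$. Call a prime $p$ a \emph{primitive} prime divisor of $a^n - 1$ if the multiplicative order of $a$ modulo $p$ equals $n$; in that case Fermat's little theorem forces $n \mid p - 1$, which is exactly the ``furthermore'' clause of the lemma. The whole statement thus reduces to showing that, outside the listed exceptional cases, $\Phi_n(a)$ contains at least one primitive prime.

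First I would establish the dichotomy that every prime $p$ dividing $\Phi_n(a)$ is either primitive or a divisor of $n$. If the order $d$ of $a$ modulo $p$ satisfies $d < n$, then $p \mid a^d - 1 = \prod_{e \mid d} \Phi_e(a)$, so $p \mid \Phi_e(a)$ for some $e \leq d < n$. Both $\Phi_n$ and $\Phi_e$ then have a common root modulo $p$, and since $\Phi_n(x)\Phi_e(x) \mid x^n - 1$ this root is a double root of $x^n - 1$ in $\mathbb{F}_p[x]$; differentiating gives $p \mid n a^{n - 1}$, and since $p \nmid a$ we conclude $p \mid n$. A slightly finer $v_p$-bookkeeping, or an appeal to the lifting-the-exponent lemma applied to $v_p(a^n - 1)$, then strengthens this to: any such non-primitive prime must be the largest prime factor of $n$ and occurs only to the first power in $\Phi_n(a)$.

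Next I would bound $\Phi_n(a)$ from below. Writing $\Phi_n(a) = \prod_{\zeta}(a - \zeta)$ over primitive $n$-th roots of unity and using $|a - \zeta| \geq a - 1$ yields $\Phi_n(a) \geq (a - 1)^{\varphi(n)}$; a mild sharpening (grouping conjugate roots so that $|a - \zeta|^2 = a^2 - 2a\cos\theta + 1$ averages well above $(a - 1)^2$) handles the case $a = 2$. Combined with the dichotomy, the absence of any primitive prime in $\Phi_n(a)$ would force $\Phi_n(a) \leq n$, which is already impossible for $a \geq 3$ and $\varphi(n) \geq 2$, and for $a = 2$ confines attention to $n \leq 6$. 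A short case check finishes the argument: $n = 2$ gives $\Phi_2(a) = a + 1$, each of whose odd prime factors is primitive, so the only exception is $a + 1$ a power of $2$; $(a, n) = (2, 6)$ gives $\Phi_6(2) = 3$, non-primitive since $3 \mid 2^2 - 1$; and every other small $(a, n)$ produces a genuine primitive prime on direct inspection.

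The main obstacle is the second step: verifying that at most one prime factor of $\Phi_n(a)$ can fail to be primitive, that this ``bad'' prime is the largest prime divisor of $n$, and that it appears only to the first power, demands a careful $p$-adic analysis. Once the structure of $\Phi_n(a)$ is pinned down this precisely, the archimedean lower bound and the inspection of small $n$ yield the lemma immediately.
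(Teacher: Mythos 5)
The paper does not actually prove this lemma: it is quoted as Bang's theorem with pointers to Bang, Zsigmondy, Dickson, Kanold, and Shapiro (Theorem 6.4A.1), so there is no in-paper argument to compare against. Your sketch is the standard cyclotomic-polynomial proof found in those references, and its overall architecture is correct: reduce to showing $\Phi_n(a)$ has a prime factor $p$ with $\mathrm{ord}_p(a)=n$ (which gives the congruence $p\equiv 1\pmod{n}$ for free), prove the dichotomy that a non-primitive prime divisor of $\Phi_n(a)$ must divide $n$ via the double-root/derivative argument, refine this with a $p$-adic (lifting-the-exponent) computation, bound $\Phi_n(a)$ from below archimedeanly, and finish with a finite case check. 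Two points in the sketch are stated more strongly than they are true and deserve care in a full write-up. First, the claim that the non-primitive prime divides $\Phi_n(a)$ only to the first power fails exactly when $n=2$: e.g.\ $\Phi_2(7)=2^3$, and more generally $v_2(a+1)$ is unbounded; this is precisely the source of the exceptional case ``$n=2$ and $a+1$ a power of $2$,'' and while your separate treatment of $n=2$ rescues the conclusion, the intermediate assertion should carry that caveat (the correct statement is that $p\,\|\,\Phi_n(a)$ for the non-primitive prime $p$ unless $n=2$). Second, the inference that $\Phi_n(a)\le n$ is ``already impossible for $a\ge 3$ and $\varphi(n)\ge 2$'' does not follow from the crude bound $\Phi_n(a)\ge (a-1)^{\varphi(n)}$: for $(a,n)=(3,6)$ one has $(a-1)^{\varphi(n)}=4<6=n$. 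The clean fix is to compare $\Phi_n(a)$ not with $n$ but with the largest prime factor $p$ of $n$ (which, by your own refinement, bounds the entire non-primitive part), against which $(a-1)^{\varphi(n)}\ge 2^{p-1}$ suffices; alternatively a strict form of the archimedean bound works. With those two repairs the argument is complete and is essentially the proof in the cited literature.
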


As a corollary, we obtain the following lemma:

\begin{lemma}\label{c}
If $a\geq 2$ and $n\geq 1$ are integers, then $a^n+1$ has a prime factor
which does not divide $a^m+1$ for any $m<n$ unless $(a, n)=(2, 3)$.
Furthermore, such a prime factor must be congruent to $1$ modulo $2n$.
\end{lemma}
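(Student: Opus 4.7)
The plan is to deduce Lemma \ref{c} from Lemma \ref{lm21} applied to $a^{2n}-1=(a^n-1)(a^n+1)$. First I would invoke Lemma \ref{lm21} on $a^{2n}-1$ to obtain a prime $p$ that divides $a^{2n}-1$ but does not divide $a^k-1$ for any $k<2n$. In particular $p\nmid a^n-1$, so $p\mid a^n+1$. If it happened that $p\mid a^m+1$ for some positive integer $m<n$, then $p\mid a^{2m}-1$ with $2m<2n$, contradicting primitivity. The ``furthermore'' conclusion $p\equiv 1\pmod{2n}$ is then immediate from the corresponding congruence conclusion of Lemma \ref{lm21}.

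Next I would translate the exceptional cases of Lemma \ref{lm21} (for exponent $2n$) into those needed for Lemma \ref{c}. The exception $(a,2n)=(2,1)$ cannot occur because $n\ge 1$ forces $2n\ge 2$. The exception $(a,2n)=(2,6)$ becomes exactly $(a,n)=(2,3)$, and a direct verification ($2^3+1=9$ has only the prime $3$, while $3\mid 2^1+1$) shows that this exception is genuine. The third exception of Lemma \ref{lm21}---$2n=2$ together with $a+1$ a power of $2$---forces $n=1$; here the defining condition ``does not divide $a^m+1$ for any $m<1$'' is vacuous, so every prime factor of $a+1$ qualifies and no additional exception need be recorded in Lemma \ref{c}.

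The only subtle point is ensuring that this last case does not demand a separate exception in the statement, which is clear because the constraint on $m$ is empty when $n=1$. Beyond that, the argument is a short bookkeeping exercise once Lemma \ref{lm21} is in hand, and I foresee no genuine obstacle.
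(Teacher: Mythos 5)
Your proposal is correct and follows essentially the same route as the paper: apply Lemma \ref{lm21} to $a^{2n}-1=(a^n-1)(a^n+1)$, note the primitive prime must divide $a^n+1$, rule out $p\mid a^m+1$ via $p\nmid a^{2m}-1$, and inherit the congruence $p\equiv 1\pmod{2n}$. Your explicit bookkeeping of the exceptional cases of Lemma \ref{lm21} is slightly more careful than the paper's proof, which leaves that translation implicit, but the argument is the same.
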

\begin{proof}
By Lemma \ref{lm21}, $a^{2n}-1=(a^n-1)(a^n+1)$ has a prime factor $p$ which does not divide $a^m-1$ for any $m<2n$.
Since $p$ does not divide $a^n-1$, $p$ must divide $a^n+1$.
On the other hand, for $m<n$, $p$ cannot divide $a^m+1$ since $p$ does not divide $a^{2m}-1$.
Finally, such a prime factor $p$ must be congruent to $1$ modulo $2n$.
\end{proof}

Now we prove that $\sbu(2^e)$ and $\sbu(3^f)$ must produce a new prime factor which is not very large.
\begin{lemma}\label{lm22}
Let $f$ be a positive integer and write $f=2t-1-\eta$ with $\eta\in \{0, 1\}$ and $t$ integers.
If $f\geq 5$, then at least one of the following statements holds.
\begin{itemize}
\item[(A)] $\eta=0$ and $\sbu(3^f)$ has a prime factor $p_1$ with $5<p_1\leq (3^t-1)/2$,
\item[(B)] $p_1=5\mid\sbu(3^f)$ and $f\equiv 2\Mod{4}$ or $f=7, 8$,
\item[(C)] $\sbu(3^f)$ has an odd prime factor $p_1$ with $5<p_1\leq\sqrt{(3^{t-\eta}-1)/2}$, or
\item[(D)] $4$ divides $t$, $\eta=1$, and $p_1=(3^{t-1}-1)/2$ is prime.
\end{itemize}
\end{lemma}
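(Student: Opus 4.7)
The plan starts from equation \eqref{eq21} in the form $\sbu(3^f) = (3^{t-\eta}-1)(3^t+1)/2$ and, in each parity/residue class of $t$, exhibits a small odd prime factor $p_1$ using Bang (Lemma \ref{lm21}) applied to a factor of shape $3^m - 1$, the companion Lemma \ref{c} applied to a factor of shape $3^m + 1$, or a direct observation that $5 \mid \sbu(3^f)$. The four alternatives (A)--(D) correspond to the four ``shapes'' of small prime factor that can arise.

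For $\eta = 0$ (so $f = 2t - 1$ and $t \geq 3$) I would apply Lemma \ref{lm21} to $3^t - 1$: the Bang exception $n = 2$, $a+1$ a power of $2$ needs $t = 2$, which is outside our range, so a primitive prime $p_1 \equiv 1 \pmod t$ exists with $p_1 \geq t + 1$. For $t \geq 5$ this gives an odd prime $p_1 \geq 7 > 5$, and being odd while $2 \mid 3^t - 1$ forces $p_1 \leq (3^t - 1)/2$, which is (A). The small cases $t = 3, 4$ must be checked by hand: $t = 3$ gives the primitive prime $13$ (still (A)), while $t = 4$ gives only the primitive prime $5$, so one falls into (B) via $5 \mid 3^4 - 1 \mid \sbu(3^7)$ with $f = 7$.

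For $\eta = 1$ (so $f = 2t - 2$ and $t \geq 4$) I would split by $t \bmod 4$ and parity. If $t \equiv 2 \pmod 4$, writing $t = 4k+2$ gives $3^t \equiv 9 \equiv -1 \pmod 5$, so $5 \mid 3^t + 1 \mid \sbu(3^f)$ and $f = 8k+2 \equiv 2 \pmod 4$, which is (B). If $4 \mid t$, then $t - 1 \equiv 3 \pmod 4$ makes $(3^{t-1}-1)/2$ odd and coprime to both $3$ and $5$ (the latter because $4 \nmid t - 1$), so every prime factor of $(3^{t-1}-1)/2$ is $\geq 7$; if $(3^{t-1}-1)/2$ is composite its smallest prime factor is both $\geq 7$ and $\leq \sqrt{(3^{t-1}-1)/2}$, giving (C), and otherwise $(3^{t-1}-1)/2$ is itself prime, giving (D). For $t$ odd, $t = 5$ is handled by hand ($5 \mid 3^4 - 1 \mid \sbu(3^8)$ with $f = 8$, so (B)), while for odd $t \geq 7$ I would set $m = (t-1)/2 \geq 3$ and apply Lemma \ref{c} to $3^m + 1$ to obtain a primitive prime $p_1 \equiv 1 \pmod{2m}$ with $p_1 \geq 2m+1 \geq 7$; the routine inequality $(3^m+1)^2 \leq 2(3^{2m}-1)$ valid for $m \geq 2$, combined with $p_1 \leq (3^m+1)/2$, yields $p_1 \leq \sqrt{(3^{t-1}-1)/2}$, which is (C).

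The main obstacle is not any individual estimate but the choice of which Bang-type factor to feed into the lemmas: when $(t-1)/2 \in \{2, 4\}$ the primitive prime of $3^{(t-1)/2} - 1$ is either missing or equal to $5$, so one is forced to switch to $3^{(t-1)/2} + 1$ or to a direct ``$5$ divides'' argument; and the two genuinely exceptional pairs $(\eta, t) = (0, 4)$ and $(1, 5)$ must be absorbed into (B) by hand, which is precisely what motivates the awkward clause ``$f = 7, 8$'' in alternative (B). Once this bookkeeping is in place, every remaining step is an application of Lemma \ref{lm21}, Lemma \ref{c}, or a routine polynomial inequality.
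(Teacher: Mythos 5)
Your proposal is correct and follows essentially the same route as the paper: the same case split on $\eta$ and on $t \bmod 4$, with Bang's theorem supplying a prime factor $>5$ of the appropriate factor of $\sbu(3^f)$, the alternatives (C)/(D) arising from whether $(3^{t-1}-1)/2$ is composite or prime when $4\mid t$, and the exceptional values $f=7,8$ absorbed into (B). The only small divergence is the subcase $\eta=1$ with $t$ odd, where you apply Lemma \ref{c} to $3^{(t-1)/2}+1$ while the paper applies Lemma \ref{lm21} to $3^{(t-1)/2}-1$; your choice handles $t=9$ uniformly (via the primitive prime $41$ of $3^4+1$), whereas the paper must treat $t=9$ as a separate hand case using $7\mid 3^3+1\mid 3^9+1$.
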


\begin{proof}
We begin by observing that $\sbu(3^f)=(3^t+1)(3^{t-\eta}-1)/2$ from \eqref{eq21}.
We put $t=2^h v$ with $v$ odd.

If $\eta=0$ and $f\neq 7$, then, we see that $t=3$ or $t\geq 5$.
Hence, Lemma \ref{lm21} implies that $(3^t-1)/2$ has a prime factor greater than $5$.
Putting $p_1$ to be the smallest one among such prime factors,
we have $5<p_1\leq (3^t-1)/2$ and (A) holds.
If $f=7$, then $\sbu(3^f)=(3^8-1)/2=2^4\times 5\times 41$ and (B) holds.

Henceforth we assume that $\eta=1$.
If $t$ is odd and $t\neq 5, 9$, then, $(t-1)/2=3$ or $(t-1)/2\geq 5$ and, like above,
$(3^{(t-1)/2}-1)/2$ has a prime factor greater than $5$.
Take the smallest $p_1$ among such prime factors.
Then, we have
$p_1\leq (3^{(t-1)/2}-1)/2<\sqrt{(3^{t-1}-1)/2}$ and
$p_1\mid (3^{(t-1)/2}-1)/2\mid (3^{t-1}-1)/2\mid \sbu(3^f)$.
Hence, (C) holds.

If $t\equiv 2\Mod{4}$, then $5\mid (3^t+1)\mid \sbu(3^f)$ and (B) holds.

If $4\mid t$, then, putting $p_1$ to be the smallest prime factor of $(3^{t-1}-1)/2$,
clearly $p_1=(3^{t-1}-1)/2$ is prime or $p_1\leq \sqrt{(3^{t-1}-1)/2}$.
Since $t-1$ is odd, we can never have $p_1=2$ or $p_1=5$.
Thus we see that $p_1=(3^{t-1}-1)/2$ is prime or $5<p_1\leq \sqrt{(3^{t-1}-1)/2}$,
implying (D) and (C) respectively.

If $t=9$, then $7\mid (3^3+1)\mid (3^9+1)\mid \sbu(3^f)$ and (C) holds.
Finally, if $t=5$, then $\sbu(3^f)=\sbu(3^8)=2^5\times 5\times 61$ and (B) holds.
\end{proof}

\begin{lemma}\label{lm23}
Let $e\geq 6$ be an integer and write $e=2s-1-\delta$ with $\delta\in \{0, 1\}$ and $s$ integers.
If $e\neq 8, 12$, then at least one of the following statements holds.
\begin{itemize}
\item[(a)] $\delta=0$ and $\sbu(2^e)$ has at least two prime factors $q_1$ and $q_2$ with $5<q_1, q_2\leq 2^s-1$,
\item[(b)] $p_2=5\mid\sbu(2^e)$.
\item[(c)] $\sbu(2^e)$ has at least two prime factors $q_1$ and $q_2$ each of which satisfies
either $q_i=2^{s-1}-1$, $q_i=2^s+1$, or $5<q_i\leq\sqrt{2^s-3}$.
Moreover, if $q_i=2^{s-1}-1$ or $q_i=2^s+1$ for $i=1$ or $2$, then $4$ divides $s$ and $\delta=1$.
\end{itemize}
\end{lemma}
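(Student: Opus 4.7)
The plan is to use \eqref{eq21} to write $\sbu(2^e)=(2^{s-\delta}-1)(2^s+1)$, then split on $\delta$ and on $s\pmod 4$ and apply Lemmas~\ref{lm21} and~\ref{c} (with base $a=2$) to each factor. First I would dispatch (b): since $5\mid 2^k-1$ iff $4\mid k$ and $5\mid 2^k+1$ iff $k\equiv 2\pmod 4$, the three cases $\delta=0$ with $s$ even, $\delta=1$ with $s\equiv 1\pmod 4$, and $\delta=1$ with $s\equiv 2\pmod 4$ all give $5\mid\sbu(2^e)$. Assume henceforth none of these holds.

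For $\delta=0$ with $s$ odd (so $s\ge 5$) I would prove (a). Lemma~\ref{lm21} gives a primitive prime factor $p_1$ of $2^s-1$ (admissible as $s\ne 1,6$), and the oddness of $s$ upgrades $p_1\equiv 1\pmod s$ to $p_1\equiv 1\pmod{2s}$, so $5<p_1\le 2^s-1$. Lemma~\ref{c} gives a primitive prime factor $q_1$ of $2^s+1$ with $q_1\equiv 1\pmod{2s}$; the divisibility $3\mid 2^s+1$ (for $s$ odd) together with $q_1\ne 3$ forces $q_1\le (2^s+1)/3\le 2^s-1$. Distinctness follows from $\gcd(2^s-1,2^s+1)=1$, establishing (a).

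For $\delta=1$ I would establish (c) in two sub-cases. If $s\equiv 0\pmod 4$, both $A=2^{s-1}-1$ and $B=2^s+1$ are coprime to $30$, so every prime factor of $AB$ is at least $7$. For each of $A,B$: if it is prime, use it as $q_i=A$ or $q_i=B$ (the moreover clause being satisfied since $4\mid s$ and $\delta=1$); if composite, the smallest prime factor $p$ satisfies $p^2\le 2^s-3$ --- immediate for $A$ from $A\le 2^s-3$, and for $B$ from ruling out $p^2\in\{2^s-2,2^s-1,2^s,2^s+1\}$ via parity, quadratic residues modulo $4$, and the identity $(p-1)(p+1)=2^s$ (which reduces $p^2=2^s+1$ to $(p,s)=(3,3)$). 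Distinctness follows from $\gcd(A,B)=1$. If instead $s\equiv 3\pmod 4$ (so $s\ge 11$, since $s\ne 3,7$), I would factor $A=(2^m-1)(2^m+1)$ with $m=(s-1)/2\ge 5$ odd, and apply Lemmas~\ref{lm21} and~\ref{c} to $2^m-1$ and $2^m+1$ respectively (both admissible since $m\ne 1,3,6$) to obtain primitive prime factors congruent to $1\pmod{2m}$, hence exceeding $5$; a direct estimate yields $(2^m\pm 1)^2\le 2^{s-1}+2^{m+1}+1\le 2^s-3$ for $s\ge 5$, placing both primes in $(5,\sqrt{2^s-3}]$, and distinctness follows from $\gcd(2^m-1,2^m+1)=1$.

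The main obstacle I anticipate is the sharp bound $p\le\sqrt{2^s-3}$ for the smallest prime factor of $B=2^s+1$ in the composite case: the naive estimate $p\le\sqrt B$ barely overshoots, and the parity and mod-$4$ arguments excluding the four borderline values of $p^2$ --- particularly the Catalan-style step $(p-1)(p+1)=2^s\Rightarrow (p,s)=(3,3)$ --- are what justify the precise numerical choice of $\sqrt{2^s-3}$ in the statement.
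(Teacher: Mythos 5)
Your proof is correct, and while it runs on the same two engines as the paper's --- the Bang--Zsigmondy statements, Lemmas~\ref{lm21} and~\ref{c}, together with the smallest-prime-factor-versus-square argument when $4\mid s$ --- the case decomposition is genuinely different and noticeably cleaner. The paper splits according to the $2$-adic valuation of $s$ (writing $s=2^gm$ with $m$ odd), proves (a) even for even $s$ by extracting primitive prime factors of $2^s-1$ and $2^{2s}-1$, and must then dispose of the sporadic values $e=7,11,15,16,23,24$ one at a time with explicit pairs such as $(7,13)$ and $(17,19)$. You instead split on $s\bmod 4$ and route every class in which $5\mid\sbu(2^e)$ --- namely $\delta=0$ with $s$ even, and $\delta=1$ with $s\equiv 1,2\pmod 4$ --- into conclusion (b); this absorbs all of the paper's exceptional $e$ uniformly and leaves only two substantive cases: $\delta=0$ with $s$ odd (your bound $q_1\le(2^s+1)/3\le 2^s-1$ via $3\mid 2^s+1$ does the job the paper accomplishes with $q_2\mid(2^s+1)/(2^{2^g}+1)$), and $\delta=1$ with $s\equiv 3$ or $0\pmod 4$, which coincide with the paper's ``$s$ odd'' and ``$g\ge 2$'' subcases; your exclusion of $p^2\in\{2^s-2,\dots,2^s+1\}$ is the same square-avoidance argument the paper phrases as ``$2^s$ is a square and $2^{s-1}-1\equiv 3\pmod 4$.'' Since the lemma asserts a disjunction, landing in (b) where the paper lands in (a) or (c) (e.g.\ at $e=16$ or $e=24$) is harmless: the main theorem's proof treats all three alternatives. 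The trade-off is that your version pushes more values of $e$ into alternative (b), which is the branch requiring the extra work in the proof of the theorem, whereas the paper's sharper placement into (a) lets more cases be killed immediately by the product inequality \eqref{eq51}; but as a proof of the lemma as stated, yours is complete and tidier.
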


\begin{proof}
We begin by observing that $\sbu(2^e)=(2^s+1)(2^{s-\eta}-1)$ from \eqref{eq21}.
We put $s=2^g m$ with $m$ odd.

If $\delta=0$, $m>1$, and $e\neq 11, 23$, then $s\neq 6, 12$ and
Lemma \ref{lm21} yields that $2^s-1$ has a prime factor $q_1\equiv 1\Mod{s}$
and $2^{2s}-1$ has a prime factor $q_2\equiv 1\Mod{2s}$ not dividing $2^s-1$ or $2^{2^{g+1}}-1$.
Clearly, we see that $q_1\neq q_2$ and $q_2$ divides $(2^s+1)/(2^{2^g}+1)$.
Moreover, since $s\geq 3$ is odd, we see that $q_1\equiv q_2\equiv 1\Mod{2s}$ and $q_2$ divides $(2^s+1)/(2^{2^g}+1)<2^s-1$.
Then, we have $5<q_1, q_2\leq 2^s-1$ and (a) holds.
If $e=11$ or $e=23$, then $7\times 13\mid (2^{12}-1)\mid\sbu(2^f)$ and therefore we can take
$(q_1, q_2)=(7, 13)$, which yields (a) again.

If $\delta=0$, $m=1$ and $s=2^g\geq 16$, then
we work as above but with $2^{s/2}-1$ and $2^s-1$ instead of $2^s-1$ and $2^{2s}-1$ respectively.
Now we can take two prime factors $q_1, q_2$ of $2^s-1$ with $5<q_1, q_2\leq 2^s-1$,
which yields (a).
If $e=7$ or $e=15$, then $5\mid (2^8-1)\mid\sbu(2^e)$ and (b) holds.

Henceforth, assume that $\delta=1$.
If $s$ is odd and $e\neq 8, 12, 16, 24$, then $s\neq 5, 7, 9, 13$ and we see from Lemma \ref{lm21} that
we can take a prime factor $q_1$ of $2^{(s-1)/2}-1$ such that $q_1\equiv 1\Mod{(s-1)/2}$
and a prime factor $q_2$ of $2^{(s-1)/2}+1$ such that $q_2\equiv 1\Mod{(s-1)/2}$.
Since $2^{(s-1)/2}-1$ and $2^{(s-1)/2}+1$ are relatively prime, we have $q_1\neq q_2$.
Now $5<q_1, q_2\leq 2^{(s-1)/2}+1<\sqrt{2^s-3}$ and (c) holds.

If $g=1$, then $5=(2^2+1)\mid (2^s+1)\mid \sigma(2^e)$.

Assume that $g\geq 2$.
Let $q_1$ and $q_2$ be the smallest prime factors of $2^{s-1}-1$ and $2^s+1$ respectively.
Since $s-1$ is odd, we cannot have $q_1=3$ or $q_1=5$.
Similarly, since $4$ divides $s$, we cannot have $q_2=3$ or $q_2=5$.

Since $2^s$ is square and $2^{s-1}-1\equiv 3\Mod{4}$, neither of $2^s+1$, $2^s-1$, nor $2^{s-1}-1$ can be square.
Hence, we see that either $q_1=2^{s-1}-1$ is prime or $q_1\leq\sqrt{2^s-3}$.
Similarly, $q_2=2^s+1$ is prime or $q_2\leq\sqrt{2^s-3}$.
Hence, we see that (c) holds.

If $e=24$, then we can take $(q_1, q_2)=(7, 13)$ since $7\times 13\mid (2^{12}-1)\mid\sbu(2^{24})$ to obtain (c).
Finally, if $e=16$, then we can take $(q_1, q_2)=(17, 19)$ since $17\times 19\mid (2^8-1)(2^9+1)=\sbu(2^{16})$
to obtain (c) observing that $p_2=19<\sqrt{2^9-3}$.
\end{proof}

We also use the following miscellaneous divisibility results.
\begin{lemma}\label{lm24}
(I) For any prime $p$ and $g\geq 2$, $\sbu(p^g)$ has a prime factor $\geq 5$.

(II) For any odd prime $p$ and $g\geq 4$, $\sbu(p^g)$ has a prime factor $\geq 7$.

(III) If $f\geq 18$ or $f=13, 14$, then $\sbu(3^f)$ has at least two distinct prime factors $\geq 127$.

(IV) If $g\geq 2$, then $\sbu(13^g)$ has at least two distinct prime factors
in addition to $2$, $3$, $41$, and $547$.

(V) For any $g\geq 1$, $\sbu(41^g)$ has a prime factor in addition to $2$, $3$, $5$, and $13$.
\end{lemma}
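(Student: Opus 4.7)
The plan is to treat parts (I)--(V) uniformly by combining the explicit formula \eqref{eq21}, written as $\sbu(p^e) = (p^{s-\delta}-1)(p^s+1)/(p-1)$ with $e = 2s-1-\delta$, with the existence of primitive prime divisors supplied by Bang's theorem (Lemma \ref{lm21}) and its corollary Lemma \ref{c}. For each part I locate the required prime(s) as primitive prime divisors of one (or both) of the subfactors $p^s+1$ and $p^{s-\delta}-1$; such primes satisfy $q \equiv 1 \pmod{2s}$ or $q \equiv 1 \pmod{s-\delta}$, so they automatically exceed any fixed lower bound once the index is large enough and they avoid any prescribed finite set of forbidden residues. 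The few Zsigmondy exceptions and the smallest values of $g$ are dispatched by direct evaluation of $\sbu(p^g)$.

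For (I), the range $g \geq 3$ with $s \geq 2$ is handled at once by the primitive prime $q$ of $p^s + 1$, which satisfies $q \equiv 1 \pmod{2s}$ and hence $q \geq 5$, outside the single Zsigmondy exception $(p,s)=(2,3)$; at that exception one falls back on the partner factor $p^{s-\delta}-1$ or verifies the conclusion by direct computation. The case $g=2$ uses $\sbu(p^2) = p^2 + 1$, which is coprime to $3$ for $p \neq 3$ and $\equiv 2 \pmod 4$ for odd $p$, forcing a prime factor $\geq 5$. Part (II) is the same argument with target $7$ in place of $5$; since $p$ is odd the Zsigmondy exception $(2,3)$ never arises and $2s+1 \geq 7$ holds automatically for $g \geq 4$.

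For (III), I factor $\sbu(3^f) = (3^t+1)(3^{t-\eta}-1)/2$ and produce a primitive prime in each of the two coprime subfactors via Lemmas \ref{lm21} and \ref{c} applied to the cyclotomic pieces $\Phi_{2t}(3)$ and $\Phi_{t-\eta}(3)$; for $f$ large enough both resulting primes automatically clear the numerical threshold. The boundary cases $f = 13, 14$ are verified by the explicit factorizations $\sbu(3^{13}) = 2^2 \cdot 547 \cdot 1093$ and $\sbu(3^{14}) = 2 \cdot 17 \cdot 193 \cdot 1093$, and small $f \geq 18$ by direct evaluation of the relevant cyclotomic factors. For (IV) and (V), the same framework applies with base $13$ or $41$: each primitive prime of $13^m \pm 1$ or $41^m \pm 1$ satisfies $q \equiv 1 \pmod m$ or $\pmod{2m}$, so the forbidden primes in $\{2,3,41,547\}$ or $\{2,3,5,13\}$ are ruled out except at a short list of exponents, which are cleared by computations such as $\sbu(13^2) = 2 \cdot 5 \cdot 17$ and $\sbu(41^2) = 2 \cdot 29^2$.

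The main obstacle is the bookkeeping in (III), where the parity of $f$, the Zsigmondy exceptions for each of the two subfactors, and the sharp numerical threshold must be combined into a coherent case split; the genuinely excluded values $f = 15, 16, 17$ confirm that the threshold $f = 18$ cannot be replaced by a simpler uniform bound and force the case analysis to respect the parity structure of $f$.
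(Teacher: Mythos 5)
Your overall strategy --- the closed form \eqref{eq21} combined with primitive prime divisors from Lemmas \ref{lm21} and \ref{c}, with small cases and Zsigmondy exceptions checked directly --- is exactly the paper's strategy, and for (II), (III) and (V) your sketch matches the paper's proof essentially step for step (your factorization $\sbu(3^{14})=2\cdot 17\cdot 193\cdot 1093$ is in fact the correct one, since $(3^7-1)/2=1093$; the paper's $547$ there is a slip). But there is one concrete point where your argument as written cannot be completed. In part (I) the Zsigmondy exception $(p,u)=(2,3)$ corresponds to $g\in\{4,5\}$, and you propose to ``fall back on the partner factor or verify the conclusion by direct computation.'' At $g=5$ the partner factor $2^3-1=7$ saves you, but at $g=4$ the direct computation gives $\sbu(2^4)=(2^2-1)(2^3+1)=27=3^3$, which has no prime factor $\geq 5$ at all: statement (I) is simply false for $(p,g)=(2,4)$. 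The paper's own proof has the identical blind spot (it invokes Lemma \ref{c} for $p^u+1$ without excepting $(2,3)$), and the error is harmless downstream because (I) is only ever applied with $p=5$; still, you should restrict (I) to odd $p$ or exclude $(2,4)$ explicitly rather than assert that a computation ``verifies the conclusion.''

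For (IV) your sketch is too thin precisely where the paper has to work. First, the exponents at which $41$ or $547$ actually divide the relevant factors form infinite arithmetic progressions ($41\mid 13^u+1$ iff $u\equiv 20\Mod{40}$, $547\mid 13^m-1$ iff $21\mid m$), so ``a short list of exponents cleared by computations'' becomes true only after you combine those divisibility conditions with the congruence $q\equiv 1\Mod{2u}$ (forcing $2u\mid 40$, etc.), and even then clearing $u=20$ requires exhibiting a second admissible prime factor of $13^{20}+1$ --- this is exactly the paper's companion-prime device ($14281\mid 13^{20}+1$, $61\mid 13^{21}-1$), which your proposal never mentions. Second, you need \emph{two distinct} primes outside $\{2,3,41,547\}$: if you insist that both chosen primes be primitive (one for $13^u+1$, one for $13^{u-\gamma}-1$), distinctness is automatic and your route is actually cleaner than the paper's; the paper instead takes arbitrary prime factors $\geq 11$ of each factor and must then rule out the coincidence $p_1=p_2$ via $\gcd(13^u+1,13^{u-\gamma}-1)\mid 14$, which is why $7$ is added to the excluded list in the intermediate step even though it is absent from the final forbidden set. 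Since your write-up confronts neither the recurring exponents nor the distinctness question, part (IV) stands as a plan rather than a proof.
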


\begin{proof}
(I) We write $g=2u-1-\gamma$ with $\gamma\in \{0, 1\}$.
For $g\geq 2$, we have $u\geq 2$ and, by Lemma \ref{c}, $p^u+1$ has a one prime factor $\geq 5$.

(II) We write $g=2u-1-\gamma$ with $\gamma\in \{0, 1\}$.
For $g\geq 4$, we have $u\geq 3$ and, by Lemma \ref{c}, $p^u+1$ has a one prime factor $\geq 7$.

(III) By Lemma \ref{lm21}, $(3^s-1)/2$ has at least one prime factor $\geq 127$ for $s\geq 126$
and, by Lemma \ref{c}, $3^t+1$ has at least one prime factor $\geq 127$ for $t\geq 126$.
Now, we can confirm that $(3^s-1)/2$ has at least one prime factor $\geq 127$ for $s\geq 7$
and $3^t+1$ has at least one prime factor $\geq 127$ for $t\geq 10$ (see, for example, \url{https://homes.cerias.purdue.edu/~ssw/cun/pmain524.txt}).
Hence, putting $f=2t-1-\eta$ with $\eta\in \{0, 1\}$,
$\sbu(3^f)=(3^t+1)(3^{t-\eta}-1)/2$ has at least two distinct prime factors $\geq 127$ for $f\geq 18$.
For $f=13, 14$, (I) can be confirmed by noting that $\sbu(3^{13})=2^2\times 547\times 1093$ and
$\sbu(3^{14})=2\times 17\times 193\times 547$ .

(IV) With the aid of Lemmas \ref{lm21} and \ref{c}, we see that
$(13^s-1)/12$ has at least one prime factor $\geq 11$ for $s\geq 3$
and $13^t+1$ has at least one prime factor $\geq 11$ for $t\geq 2$
(these can be confirmed by direct calculation for $3\leq s\leq 6$ and $2\leq t\leq 5$).
If $41$ divides $13^t+1$, then $t\equiv 20\Mod{40}$ and $14281$ also divides $13^t+1$.
Similarly, if $41$ divides $(13^s-1)/12$, then $14281$ also divides $(13^s-1)/12$.
If $547$ divides $(13^u-1)/12$, then $21\mid s$ and $61$ also divides $(13^s-1)/12$.
Since $13^{21}\equiv 1\Mod{547}$, $547$ can never divide $13^t+1$.
Hence, for $g\geq 5$, putting $g=2u-1-\gamma$ with $\gamma\in \{0, 1\}$,
each of $13^u+1$ and $(13^{u-\gamma}-1)/12$ has at least one prime factor $p_1$ and $p_2$ respectively
both other than $2$, $3$, $7$, $41$, or $547$.
If $p_1=p_2$, then $p_1$ must divide $13^\gamma+1$ and therefore $p_1=2$ or $p_1=7$, which is a contradiction.
Hence, $\sbu(13^g)=(13^u+1)(13^{u-\gamma-1}-1)/12$ has at least two prime factors besides
$2$, $3$, $41$, and $547$ for $g\geq 5$.
For $2\leq g\leq 4$, (IV) can be easily confirmed.

(V) For $p=41$, from Lemma \ref{c}, we see that $41^u+1$ has a prime factor greater than $5$ for $u\geq 3$.
If $13$ divides $41^u+1$, then $u\equiv 6\Mod{12}$ and $29$ also divides $41^u+1$.
Hence, for $g\geq 2$, putting $g=2u-1-\gamma$ with $\gamma\in \{0, 1\}$,
$41^u+1$ has a prime factor in addition to $2$, $3$, $5$, and $13$
and so does $\sbu(41^g)$.
For $g=1$, (V) is clear since $\sbu(41)=41+1=2\times 3\times 7$.
\end{proof}

\section{Proof of the Theorem}

Assume that $N$ is a biunitary triperfect number.
We put integers $e$ and $f$ by $2^e\mid\mid N$ and $3^f\mid\mid N$
and write $e=2s-1-\delta$ with $\delta\in \{0, 1\}$ and $f=2t-1-\eta$ with $\eta\in \{0, 1\}$.

In this section, once we write $p_i$ for a prime factor of $N$ with an index $i$,
$e_i$ denotes the exponent of $p_i$ dividing $N$.
Clearly, $\sbu(p_i^{e_i})$ divides $N$ for each $i$.

We begin with small $e$ and $f$.
Although Haukkanen and Sitaramaiah \cite{HS} have proved that $e\geq 7$ whether $3^3\mid N$ or not,
we shall give a proof for small $e$ in the view of self-containedness.

\begin{lemma}\label{lm31}
We must have $e\geq 4$.
\end{lemma}
\begin{proof}
We cannot have $e=0$ as Hagis \cite{Hag} has shown.
Indeed, if $N>1$ is odd, then, by Lemma \ref{a}, $\sbu(N)$ must be even and $\sbu(N)\neq 3N$.
If $e=1$, then $N$ can have no odd prime factor other than $3$ and therefore
we must have $\sbu(3^f)=2^e$.
By Lemma \ref{lm21}, we must have $f=1$, contrary to the assumption that $f\geq 3$.

If $e=2$, then $p_1=5=\sbu(2^2)\mid \sbu(N)=3N$ and therefore $5$ must divide $N$.
From Lemma \ref{a}, we see that $N$ has no further prime factor.
Hence, $\sbu(3^f)$ can have no prime factor other than $2$ or $5$.
By Lemma \ref{lm24} (II), we must have $f=3$.
Now $e_1\geq 2$ since $5^2\mid \sbu(2^2 3^3)\mid 3N$.
However, from Lemma \ref{lm24} (I), we see that $p_2\mid \sbu(5^{e_1})\mid \sbu(N)=3N$ for some $p_2>5$
and $2^3\mid N$ by Lemma \ref{a}, which is a contradiction.

If $e=3$, then $p_1=5$ must divide $N$.
If $e_1\neq 2$, then, with the aid of Lemma \ref{b}, we have
\begin{equation}
\frac{\sbu(N)}{N}\geq \frac{\sbu(2^3 3^4 5)}{2^3 3^4 5}=\frac{28}{9}>3,
\end{equation}
contrary to the assumption that $\sbu(N)=3N$.
Hence, we must have $e_1=2$ and $p_2=13$ must divide $N$ since $13\mid \sbu(5^2)\mid \sbu(N)=3N$.
With the aid of Lemma \ref{lm24} (II), we see that $\sbu(13^{e_2})$ has an odd prime factor $p_3$
in addition to $3$, $5$, and $13$.
By Lemma \ref{a}, we must have $2^4\mid\sbu(3^f 5^2 13^{e_2} p_3^{e_3})\mid 3N$ and $2^4\mid N$,
which is a contradiction.
Thus, we cannot have $e=3$.
\end{proof}

\begin{lemma}\label{lm32}
We cannot have $e=5$.
\end{lemma}
\begin{proof}
Assume that $e=5$.
Then, $\sbu(2^5)=3^2\times 7$ and $p_1=7$ must divide $N$.
If $e_1\neq 2$, then
\begin{equation}
\frac{\sbu(N)}{N}\geq \frac{\sbu(2^5 3^4 7)}{2^5 3^4 7}=\frac{28}{9}>3
\end{equation}
and if $f\neq 4$ and $e_1=2$, then $5\mid\sbu(p_1^{e_1})=7^2+1$ and
\begin{equation}
\frac{\sbu(N)}{N}\geq \frac{\sbu(2^5 3^6 7^2 5^2)}{2^5 3^6 7^2 5^2}=\frac{6929}{2268}>3,
\end{equation}
which are both contradictions.
If $f=4$ and $e_1=2$, then $p_2=5$ divides $N$ and
$2^6\mid \sbu(3^4 7^2 5^{e_2})\mid 3N$, which is a contradiction.
\end{proof}

\begin{lemma}\label{lm33}
We cannot have $e=6$.
\end{lemma}
\begin{proof}
Assume that $e=6$.
We observe that $p_1=7$ and $p_2=17$ must divide $N$ since $7\times 17=\sbu(2^6)\mid \sbu(N)=3N$.
If $e_1\neq 2$ and $e_2\neq 2$, then
\begin{equation}
\frac{\sbu(N)}{N}\geq \frac{\sbu(2^6 3^4 7\times 17)}{2^6 3^4 7\times 17}=\frac{28}{9}>3,
\end{equation}
which is a contradiction.
If $e_1\neq 2$ and $e_2=2$, then, since $5$ divides $17^2+1$, $p_3=5$ must divide $N$ and
\begin{equation}
\frac{\sbu(N)}{N}\geq \frac{\sbu(2^6 3^4 5^2 7)}{2^6 3^4 5^2 7}=\frac{6188}{2025}>3,
\end{equation}
which is a contradiction again.

Now we must have $e_1=2$.
We see that $p_3=5$ and $e_3\geq 2$ since $5^2\mid (7^2+1)=\sbu(7^2)$.
We observe that we cannot have $f=4$ since $2^7\mid \sbu(3^4 7^2 17^{e_2} 5^{e_3})$.
If $e_3>2$, then
\begin{equation}
\frac{\sbu(N)}{N}>\frac{\sbu(2^6 3^6 7^2 5)}{2^6 3^6 7^2 5}=\frac{45305}{13608}>3,
\end{equation}
which is a contradiction.
Hence, $e_3=2$ and $p_4=13$ must divide $N$.
However, this is impossible.
Indeed, if $e_4=2$, then $5^3\mid\sbu(7^2 13^2)\mid 3N$, contrary to $e_3=2$,
and if $e_4\neq 2$, then
\begin{equation}
\frac{\sbu(N)}{N}\geq \frac{\sbu(2^6 3^6 7^2 5^2 13)}{2^6 3^6 7^2 5^2 13}=\frac{9061}{2916}>3,
\end{equation}
which is a contradiction.
\end{proof}

\begin{lemma}\label{lm34}
We cannot have $e\geq 7$ and $f=3$.
\end{lemma}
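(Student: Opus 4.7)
The plan is to assume $e \geq 7$ and $f = 3$ and derive $\sbu(N)/N > 3$, contradicting triperfection. The first step is to observe that
\[
\sbu(3^3) = 1 + 3 + 9 + 27 = 40 = 2^3 \cdot 5
\]
divides $\sbu(N) = 3N$; since $5 \nmid 3$, we must have $5 \mid N$. Writing $a \geq 1$ for the exact exponent of $5$ in $N$, the main lower bound I will exploit is
\[
\frac{\sbu(N)}{N} \geq \frac{\sbu(2^e)}{2^e} \cdot \frac{40}{27} \cdot \frac{\sbu(5^a)}{5^a},
\]
with further factors $\geq 1$ from any other prime divisors of $N$.

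I will first dispose of the case $a \neq 2$: Lemma \ref{b} gives $\sbu(5^a)/5^a \geq 6/5$, and the same lemma applied with $m = 2$ gives $\sbu(2^e)/2^e \geq 7/4$ for $e \geq 5$. Multiplying yields $\sbu(N)/N \geq (7/4)(40/27)(6/5) = 28/9 > 3$, a contradiction. So I may assume $a = 2$, which only gives the smaller factor $\sbu(5^2)/5^2 = 26/25$. A direct check from the explicit formula shows that for $e \geq 7$ with $e \neq 8$, the minimum of $\sbu(2^e)/2^e$ equals $2015/1024$, attained at $e = 10$, and
\[
(2015/1024)(40/27)(26/25) = 5239/1728 > 3,
\]
again a contradiction.

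The main obstacle is the borderline subcase $e = 8$, where the above product falls just short of $3$: $(495/256)(40/27)(26/25) = 143/48 < 3$. To close this gap, I use the factorization $\sbu(2^8) = 495 = 3^2 \cdot 5 \cdot 11$ to force $11 \mid N$, which contributes a further factor $\sbu(11^b)/11^b$ where $b \geq 1$ is the exponent of $11$ in $N$. If $b = 2$, this factor equals $122/121$, and $(143/48)(122/121) = 1586/528 > 3$; otherwise Lemma \ref{b} gives the factor at least $12/11$, and $(143/48)(12/11) = 13/4 > 3$. Either way $\sbu(N)/N > 3$, finishing the contradiction.
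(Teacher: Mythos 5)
Your proof is correct, and it takes a genuinely different route from the paper's. Both proofs start the same way: $\sbu(3^3)=40$ forces $5\mid N$, and the case $5^2\nmid\mid N$ is killed by a ratio bound (the paper uses the sharper uniform minimum $\sbu(2^e)/2^e\geq 495/256$ for $e\geq 7$ to get $55/16>3$; your cruder $7/4$ from Lemma \ref{b} still gives $28/9>3$). The divergence is in the case $5^2\mid\mid N$: the paper stays with the single bound $495/256$, which is not enough by itself, and instead chases the chain $\sbu(5^2)=2\cdot 13$, forces $13^2\mid\mid N$ and $17^2\mid\mid N$ by further ratio bounds, and concludes with the divisibility contradiction $5^3\mid\sbu(3^3\,13^2\,17^2)\mid 3N$ against $5^2\mid\mid N$. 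You avoid the chain entirely by sharpening the $2$-part: for $e\geq 7$ with $e\neq 8$ the three-factor product already exceeds $3$ (your minimization claim, that the minimum is $2015/1024$ at $e=10$, is asserted rather than argued, but it follows immediately from the monotonicity of $2-2^{-e}$ in odd $e$ and of $2-2^{1-s}-2^{2-2s}$ in even $e=2s-2$, so this is a harmless gap), and in the single borderline case $e=8$ you extract the prime $11$ from $\sbu(2^8)=3^2\cdot 5\cdot 11$ and check both exponent cases for $11$. Your version is shorter and purely a ratio argument; the paper's version is longer but uses only one fixed bound on $\sbu(2^e)/2^e$ and a divisibility contradiction of the type it reuses throughout the rest of the proof.
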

\begin{proof}
Assume that $e\geq 7$ and $f=3$.
Clearly $p_1=5$ must divide $N$ since $5\mid \sbu(3^3)\mid 3N$.
If $e_1\neq 2$, then
\begin{equation}
\frac{\sbu(N)}{N}\geq \frac{\sbu(2^8 3^3 5)}{2^8 3^3 5}=\frac{55}{16}>3,
\end{equation}
which is a contradiction.

If $e_1=2$, then, since $\sbu(5^{e_1})=2\times 13$, $p_2=13$ must divide $N$.
We must have $e_2=2$, $p_3=17$, and $e_3=2$ since otherwise
\begin{equation}
\frac{\sbu(N)}{N}\geq \frac{\sbu(2^8 3^3 5^2 13)}{2^8 3^3 5^2 13}=\frac{77}{24}>3
\end{equation}
or
\begin{equation}
\frac{\sbu(N)}{N}\geq \frac{\sbu(2^8 3^3 5^2 13^2 17)}{2^8 3^3 5^2 13^2 17}=\frac{165}{52}>3,
\end{equation}
which is a contradiction.
But then, we must have $5^3\mid\sbu(3^3 13^2 17^2)\mid 3N$, contrary to $e_1=2$.
\end{proof}

\begin{lemma}\label{lm35}
We cannot have $e\geq 7$ and $f=4$.
\end{lemma}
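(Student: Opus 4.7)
The plan is to mimic the case analysis of Lemma \ref{lm34} with $f=4$ in place of $f=3$. Using \eqref{eq21} I would first compute $\sbu(3^4) = 112 = 2^4 \cdot 7$, so the prime $p_1 = 7$ must divide $N$. The universal bound I would rely on is Lemma \ref{b} applied with $m=4$, which yields $\sbu(2^e)/2^e \geq \sbu(2^8)/2^8 = 495/256$ for every $e \geq 7$; combined with $\sbu(3^4)/3^4 = 112/81$, this is what drives every subsequent inequality.

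The argument then branches on the exponent $e_1$ of $7$ in $N$. If $e_1 \neq 2$, Lemma \ref{b} gives $\sbu(7^{e_1})/7^{e_1} \geq 8/7$, and
\[
\frac{\sbu(N)}{N} \geq \frac{495}{256}\cdot\frac{112}{81}\cdot\frac{8}{7} = \frac{55}{18} > 3
\]
already contradicts $\sbu(N) = 3N$. If $e_1 = 2$, then $\sbu(7^2) = 50 = 2 \cdot 5^2$ forces $p_2 = 5 \mid N$ with $e_2 \geq 2$. For $e_2 \neq 2$ the extra factor $\sbu(5^{e_2})/5^{e_2} \geq 6/5$ pushes the product to $275/84 > 3$, again a contradiction. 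For $e_2 = 2$, $\sbu(5^2) = 26 = 2 \cdot 13$ forces $p_3 = 13 \mid N$; I would rule out $e_3 \neq 2$ by the analogous product bound (which is again $55/18 > 3$), while $e_3 = 2$ yields $\sbu(7^2)\sbu(13^2) = 50 \cdot 170 = 2^2 \cdot 5^3 \cdot 17$, whence $5^3 \mid \sbu(N) = 3N$, contradicting $e_2 = 2$.

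The main obstacle is that the numerical bounds are genuinely tight: each product I compute lands only just above $3$ (for instance $55/18 \approx 3.056$), so I must use the exact minimum $495/256$ of $\sbu(2^e)/2^e$ over $e \geq 7$, attained at $e=8$, rather than any weaker proxy, and I must continue branching one prime at a time rather than hope a single global inequality closes the argument. The endgame, squeezing out $5^3 \mid 3N$ from the divisibility identity $\sbu(7^2)\sbu(13^2) = 2^2 \cdot 5^3 \cdot 17$, is a purely arithmetic observation and directly mirrors the closing move of Lemma \ref{lm34}.
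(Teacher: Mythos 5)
Your proposal is correct and follows essentially the same route as the paper: extract $p_1=7$ from $\sbu(3^4)=2^4\cdot 7$, branch on $e_1$, then on $e_2$ and $e_3$ with the same numerical bounds $55/18$, $275/84$, $55/18$ (the paper writes these as $\sbu(2^8 3^4\cdots)/(2^8 3^4\cdots)$, which is exactly your use of the minimum $\sbu(2^8)/2^8=495/256$ from Lemma \ref{b}), and close with $5^3\mid\sbu(7^2 13^2)$ contradicting $e_2=2$. No gaps.
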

\begin{proof}
Assume that $e\geq 7$ and $f=4$.
Clearly $p_1=7$ must divide $N$ since $7\mid \sbu(3^4)\mid 3N$.
If $e_1\neq 2$, then
\begin{equation}
\frac{\sbu(N)}{N}\geq \frac{\sbu(2^8 3^4 7)}{2^8 3^4 7}=\frac{55}{18}>3,
\end{equation}
which is a contradiction.

If $e_1=2$, then, since $\sbu(7^{e_1})=2\times 5^2$, $p_2=5$ must divide $N$.
We must have $e_2=2$, $p_3=13$, and $e_3=2$, since otherwise
\begin{equation}
\frac{\sbu(N)}{N}\geq \frac{\sbu(2^8 3^4 7^2 5)}{2^8 3^4 7^2 5}=\frac{275}{84}>3
\end{equation}
or
\begin{equation}
\frac{\sbu(N)}{N}\geq \frac{\sbu(2^8 3^4 7^2 5^2 13)}{2^8 3^4 7^2 5^2 13}=\frac{55}{18}>3,
\end{equation}
which is a contradiction.
Hence, we must have $5^3\mid N$ since $5^3\mid\sbu(7^2 13^2)$, which is also a contradiction.
\end{proof}

Now we prove remaining cases.
Assume that $f\geq 5$, $e\geq 7$ and $e\neq 8, 12$.
Then, a prime $p_1$ taken from Lemma \ref{lm22} must divide $N$ since $p_1\mid\sbu(3^f)\mid 3N$ and $p_1\neq 3$.
If (b) in Lemma \ref{lm23} holds, then $p_2=5$ must divide $N$ and we have $p_1>5=p_2$.
If (a) or (c) in Lemma \ref{lm23} holds, then we can choose a prime $p_2\in\{q_1, q_2\}$ not equal to $p_1$.
Like above, $p_2$ must divide $N$.
We see that if (c) in Lemma \ref{lm23} holds, then we have either
(c$^\prime$) $5<p_2\leq\sqrt{2^s-3}$ or (d) $4\mid s$, $\delta=1$, and either of
$p_2=2^{s-1}-1$ or $p_2=2^s+1$ is prime.
We put $N_1=2^e p_2^{e_2}$ if (a), (b), or (c$^\prime$) holds or (d) holds with $e_2\neq 2$.
It is clear that $N_1\mid\mid N$.
If (d) holds with $e_2=2$, then, we observe that $p_i\equiv 2\Mod{5}$ and $5\mid (p_i^2+1)\mid\sbu(N)=3N$
and thus we see that $N_1=2^e 5^{e_3}\mid\mid N$.
Similarly, we put $N_2=3^f p_1^{e_1}$ if (A), (B), or (C) holds or (D) holds with $e_2\neq 2$
and $N_2=3^f 5^{e_3}$ if (D) holds with $e_2=2$ to see that $N_2\mid\mid N$.

We observe that $\sbu(N_1)/N_1>2$ and $\sbu(N_2)/N_2>3/2$.
Indeed, if (a) holds, then
\begin{equation}
\frac{\sbu(N_1)}{N_1}
\geq \frac{(2^{e+1}-1)(p_2^2+1)}{2^e p_2^2}
\geq \frac{(2^{2s}-1)((2^s-1)^2+1)}{2^{2s-1}(2^s-1)^2}>2.
\end{equation}
If (c$^\prime$) holds, then
\begin{equation}
\frac{\sbu(N_1)}{N_1}
\geq \frac{(2^{s-1}-1)(2^s+1)(p_2^2+1)}{2^{2s-2} p_2^2}
\geq \frac{(2^{s-1}-1)^2(2^s-2)}{2^{2s-3}(2^s-3)}>2.
\end{equation}
If (d) holds with $e_2\neq 2$, then
\begin{equation}
\frac{\sbu(N_1)}{N_1}
\geq \frac{(2^{s-1}-1)(2^s+1)(p_2+1)}{2^{2s-2} p_2}
\geq \frac{(2^{s-1}-1)(2^s+2)}{2^{2s-2}}>2.
\end{equation}
If (b) holds, then
\begin{equation}
\frac{\sbu(N_1)}{N_1}
\geq \frac{(2^{s-\eta}-1)(2^s+1)(5^{e_2}+1)}{2^{2s-1-\eta} 5^{e_2}}
\geq \frac{26(2^{s-\eta}-1)(2^s+1)}{25\times 2^{2s-1-\eta}}>2,
\end{equation}
which also holds with $e_2$ replaced by $e_3$ in the case (d) with $e_2=2$.
Thus, we see that $\sbu(N_1)/N_1>2$ in any case.

Similarly, if (A) holds, then
\begin{equation}
\frac{\sbu(N_2)}{N_2}
\geq \frac{(3^{f+1}-1)(p_1^2+1)}{2\times 3^f p_1^2}
\geq \frac{(3^{2t}-1)((3^t-1)^2+4)}{2\times 3^{2t-1}(3^t-1)^2}>\frac{3}{2}.
\end{equation}
If (C) holds, then
\begin{equation}
\frac{\sbu(N_2)}{N_2}
\geq \frac{(3^{t-\eta}-1)(3^t+1)(p_1^2+1)}{2\times 3^{2t-1-\eta} p_1^2}
\geq \frac{(3^{t-\eta}-1)(3^t+1)(3^{t-\eta}+1)}{2\times 3^{2t-1-\eta}(3^{t-\eta}-1)}
>\frac{3}{2}.
\end{equation}
If (D) holds with $e_1\neq 2$, then
\begin{equation}
\frac{\sbu(N_2)}{N_2}
\geq \frac{(3^{t-1}-1)(3^t+1)(p_1+1)}{2\times 3^{2t-2} p_1}
\geq \frac{(3^{t-1}-1)(3^t+1)(3^{t-1}+1)}{2\times 3^{2t-2}(3^{t-1}-1)}
>\frac{3}{2}.
\end{equation}
If (B) holds, then
\begin{equation}
\frac{\sbu(N_2)}{N_2}
\geq \frac{(3^{t-\eta}-1)(3^t+1)(5^{e_1}+1)}{2\times 3^{2t-1-\eta} 5^{e_1}}
\geq \frac{26(3^{t-1}-1)(3^t+1)}{2\times 25\times 3^{2t-1-\eta}}
>\frac{3}{2},
\end{equation}
which also holds with $e_1$ replaced by $e_3$ in the case (D) with $e_1=2$.
Thus, we see that $\sbu(N_2)/N_2>3/2$ in any case.

If (a) or (c$^\prime$) holds, then $\gcd(N_1, N_2)=1$ and
\begin{equation}\label{eq51}
\frac{\sbu(N)}{N}\geq \frac{\sbu(N_1)}{N_1}\times \frac{\sbu(N_2)}{N_2}>3,
\end{equation}
which is a contradiction.
If (b) or (d) holds and (A) or (C) also holds, then, we have $\gcd(N_1, N_2)=1$ and a contradiction
\eqref{eq51} again.

Now we settle two cases (i) (b) or (d) holds and (B) or (D) also holds and (ii) $e=8$.
In both cases, $5$ must divide $N$ since $5$ divides $\sbu(2^8)=3^2\times 5\times 11$.
We rewrite $p_1=5$.
If $e_1\neq 2$, then
\begin{equation}
\frac{\sbu(N)}{N}\geq \frac{\sbu(2^8 3^6 5)}{2^8 3^6 5}=\frac{5863}{1728}>3,
\end{equation}
which is a contradiction.
Hence, $e_1=2$.

Now, rewriting $p_2=13$, $p_2$ must divide $N$.
Hence, if $e\neq 8$, then
\begin{equation}
\frac{\sbu(N)}{N}\geq \frac{\sbu(2^{10} 3^6 5^2 13^2)}{2^{10} 3^6 5^2 13^2}>3
\end{equation}
and, if $e=8$ and $f\neq 6$, then
\begin{equation}
\frac{\sbu(N)}{N}\geq \frac{\sbu(2^8 3^8 5^2 13^2)}{2^8 3^8 5^2 13^2}>3
\end{equation}
and we have a contradiction.
Thus, we must have $e=8$ and $f=6$.
Since $f=6$, $p_3=7$ must divide $N$.
If $e_3=2$, then $5^3\mid \sbu(2^8 7^2)\mid 3N$, which is incompatible with $e_1=2$.
If $e_3\neq 2$, then
\begin{equation}
\frac{\sbu(N)}{N}\geq \frac{\sbu(2^8 3^6 7)}{2^8 3^6 7}=\frac{29315}{9072}>3,
\end{equation}
a contradiction again.

Now we assume that $e=12$.
Then, $p_1=7$ must divide $N$ since $7\mid (2^6-1)\mid \sbu(2^{12})$.
If $e_1\neq 2$, then 
\begin{equation}
\frac{\sbu(N)}{N}\geq \frac{\sbu(2^{12} 3^6 7)}{2^{12} 3^6 7}=\frac{22919}{6912}>3,
\end{equation}
and, if $e_1=2$, then $5$ divides $N$ and
\begin{equation}
\frac{\sbu(N)}{N}\geq \frac{\sbu(2^{12} 3^6 7^2 5^2)}{2^{12} 3^6 7^2 5^2}=\frac{297947}{96768}>3.
\end{equation}
Thus, we have a contradiction.

Now the only remaining case is the case $e=4$.
We shall prove that if $e=4$, then $N=2160$ to complete the proof of Theorem \ref{th1}.
We begin by showing that if $N\neq 2160$, then $f\geq 5$ and $5$ cannot divide $N$.

\begin{lemma}\label{lm51}
If $e=4$ and $f\leq 4$, then $N=2160$.
\end{lemma}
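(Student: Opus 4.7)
The plan is to split into the two possible values $f\in\{3,4\}$ (forced by $f\geq 3$ together with the hypothesis $f\leq 4$) and exploit Lemma \ref{a} via the identity $v_2(\sbu(N))=v_2(3N)=4$. From \eqref{eq21} one computes $\sbu(2^4)=27$ (odd), $\sbu(3^3)=40=2^3\times 5$, and $\sbu(3^4)=112=2^4\times 7$. Since $\sbu(2^4)$ contributes no factors of $2$ while every odd prime power $p_i^{e_i}\mid\mid N$ contributes at least one factor of $2$ to $\sbu(N)$, the parity budget is very tight and does most of the work.

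For $f=4$ the argument is immediate: $\sbu(3^4)$ already saturates the budget with $v_2=4$, but $7\mid\sbu(3^4)\mid 3N$ forces $7\mid N$, so $\sbu(7^{e_3})$ contributes at least one more factor of $2$, giving $v_2(\sbu(N))\geq 5$, a contradiction. So $f=4$ is impossible.

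For $f=3$ one first notes $\sbu(2^4\cdot 3^3)/(2^4\cdot 3^3)=5/2$, so the part of $N$ coprime to $6$ must contribute exactly the ratio $6/5$. Since $5\mid\sbu(3^3)\mid 3N$ forces $5\mid N$, and $\sbu(3^3)$ already accounts for $2^3$ of the parity budget, the remaining slack of a single factor of $2$ permits at most one further odd prime divisor of $N$; that prime must therefore be $5$. Hence $N=2^4\cdot 3^3\cdot 5^{e_1}$ for some $e_1\geq 1$, and one needs $\sbu(5^{e_1})/5^{e_1}=6/5$. The value $e_1=1$ gives equality and recovers $N=2160$; the value $e_1=2$ gives ratio $26/25\neq 6/5$ and moreover forces $13\mid N$, re-violating the parity count; and for $e_1\geq 3$ Lemma \ref{b} yields $\sbu(5^{e_1})/5^{e_1}\geq (1+1/5)(1+1/5^3)>6/5$, ruling out all remaining cases.

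The only step I would worry about is disposing of large $e_1$ in the $f=3$ subcase, but Lemma \ref{b} packages exactly the inequality needed. Everything else reduces to the parity observation from Lemma \ref{a} together with a handful of direct evaluations of $\sbu$ on small prime powers, so the proof should be short and fully elementary.
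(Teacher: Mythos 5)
Your proposal is correct and follows essentially the same route as the paper: both cases $f=4$ and $f=3$ are settled by the $2$-adic valuation count from Lemma \ref{a} (the budget $v_2(\sbu(N))=v_2(3N)=4$ against $v_2(\sbu(3^4))=4$ forcing a contradiction via $7\mid N$, and $v_2(\sbu(3^3))=3$ leaving room for only the prime $5$), followed by the elimination of $e_1=2$ via the forced factor $13$ and of $e_1\geq 3$ via the ratio bound of Lemma \ref{b}. The only cosmetic difference is that you pin down $N=2^4 3^3 5^{e_1}$ by parity before solving $\sbu(5^{e_1})/5^{e_1}=6/5$, whereas the paper argues case by case on $e_1$ directly; the substance is identical.
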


\begin{proof}
If $f=4$, then $\sbu(3^f)=2^4 7$ and therefore $p_1=7$ must divide $N$.
and $2^5\mid \sbu(3^f 7^{e_1})\mid 3N$, contrary to the assumption that $e=4$.

If $f=3$, then $\sbu(3^f)=2^3 5$ and $p_1=5$ must divide $N$.
If $e_1=1$, then $\sbu(2^4 3^3 5)=2^4 3^4 5$ and therefore $N=2^4 3^3 5$.
If $e_1=2$, then $p_1^2+1=2\times 13$ and $p_2=13$ must divide $N$,
which is impossible since $2^5\mid \sbu(3^3 5^2 13^{e_2})$.
If $e_1\geq 3$, then
\begin{equation}
\frac{\sbu(N)}{N}>\frac{\sbu(2^4 3^3 5)}{2^4 3^3 5}=3,
\end{equation}
which is a contradiction.
Hence, if $f=3$, then we have $N=2^4 3^3 5$.
\end{proof}

Now we must have $f\geq 5$ if $N\neq 2160$.

\begin{lemma}\label{lm52}
If $e=4$ and $f\geq 5$, then $5$ cannot divide $N$.
\end{lemma}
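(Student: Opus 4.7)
The plan is to assume for contradiction that $5 \mid N$, write $5^{e_1} \| N$, and derive a contradiction by case-splitting on $e_1$. Two facts will be used throughout: $\sbu(2^4) = 27$, so $\sbu(2^4)/2^4 = 27/16$; and by Lemma~\ref{b} applied with $m = 3$, $\sbu(3^f)/3^f > 40/27$ for every $f \geq 5$ with $f \neq 6$, while $\sbu(3^6)/3^6 = 1066/729$. The key numerical identity is $(27/16)(40/27) = 5/2$.

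If $e_1 = 1$, the three-factor bound $(27/16)(\sbu(3^f)/3^f)(6/5)$ strictly exceeds $(5/2)(6/5) = 3$ whenever $f \neq 6$, giving $\sbu(N)/N > 3$ and the required contradiction. In the borderline case $f = 6$ the factorization $\sbu(3^6) = 2 \cdot 13 \cdot 41$ forces $13, 41 \mid N$; feeding $\sbu(13^{e_2})/13^{e_2}$ and $\sbu(41^{e_3})/41^{e_3}$ into the bound closes the small deficit in all sub-cases, and in the sub-cases $e_2 = 2$ or $e_3 = 2$ the factorizations $\sbu(13^2) = 2 \cdot 5 \cdot 17$ and $\sbu(41^2) = 2 \cdot 29^2$ additionally force $17 \mid N$ or $29 \mid N$, contributing the extra factors $18/17$ or $30/29$ needed to push the product past $3$.

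If $e_1 \geq 3$, Lemma~\ref{b} applied with $m = 2$ gives $\sbu(5^{e_1})/5^{e_1} \geq 31/25$ whenever $e_1 \neq 4$ and $\geq 756/625$ in general. Since $(5/2)(31/25) = 31/10 > 3$, the basic bound exceeds $3$ in every sub-case except the single pair $(e_1, f) = (4, 6)$, where $(27/16)(1066/729)(756/625) \approx 2.985 < 3$. For that remaining pair, $\sbu(5^4) = 2^2 \cdot 3^3 \cdot 7$ forces $7 \mid N$, and the extra factor $\sbu(7^{e_4})/7^{e_4} \geq 50/49$ is enough to close the gap.

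The delicate case is $e_1 = 2$. Here $\sbu(5^2) = 2 \cdot 13$ forces $13 \mid N$, but the basic bound $(27/16)(\sbu(3^f)/3^f)(26/25)$ never reaches $3$. I plan to case-split on $f$ and use the further primes forced by $\sbu(3^f)$ for each $f$ (for example $7 \mid N$ when $f = 5$, $41 \mid N$ when $f = 6$ or $7$), then branch on the sub-exponents $e_2, e_3, e_4$. Multiplicative bounds alone will not suffice in the smallest-exponent sub-cases, and the decisive extra input is the exact valuation identity $\nu_5(3N) = 2$ played against the $5$-factors contributed by $\sbu(7^2) = 2 \cdot 5^2$ and $\sbu(13^2) = 2 \cdot 5 \cdot 17$, which quickly overshoot the allowed $\nu_5 = 2$. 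Keeping this branching under control while combining multiplicative bounds, new-prime propagation, and exact valuation arithmetic is where the main difficulty of the lemma lies.
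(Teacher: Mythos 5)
Your handling of $e_1=1$ and $e_1\geq 3$ is essentially workable and close in spirit to the paper's treatment of $e_1\neq 2$, but the case you yourself flag as delicate, $e_1=2$, is where the lemma actually lives, and the mechanism you propose for it does not close it. The $\nu_5$ ``overshoot'' fails already in the concrete sub-case $f=5$, $7^2\mid\mid N$, $13\mid\mid N$: there $\sbu(7^2)=2\cdot 5^2$ contributes exactly $5^2$ and nothing else contributes a factor of $5$, so $\nu_5(\sbu(N))=2=\nu_5(3N)$ and there is no contradiction, while the multiplicative bound $(27/16)(364/243)(26/25)(50/49)(14/13)\approx 2.89$ stays below $3$ and cannot exclude the existence of further prime factors. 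Your plan also gives no uniform argument for large $f$ (there are infinitely many $f\geq 9$ to dispose of, and naming ``the further primes forced by $\sbu(3^f)$'' is not a finite case analysis). The missing idea is the $2$-adic valuation via Lemma~\ref{a}: since $\sbu(N)=3N$ and $2^4\mid\mid N$, we need $\nu_2(\sbu(N))=4$, yet every odd prime factor of $N$ contributes at least one factor of $2$ to $\sbu(N)$, and $\sbu(3^f)$ typically contributes $2^2$ or more. In the sub-case above this gives $\nu_2(\sbu(N))\geq \nu_2(\sbu(3^5))+\nu_2(\sbu(5^2))+\nu_2(\sbu(7^2))+\nu_2(\sbu(13^{e_2}))=2+1+1+1=5>4$, an immediate contradiction; and for $f\geq 9$ one combines this count with Lemma~\ref{lm24}~(III) (two new odd prime factors of $\sbu(3^f)$) to cap $\omega(N)$. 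This $\nu_2$ argument, which you never invoke, is the engine of the paper's proof of the $e_1=2$ case.

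A secondary, smaller gap: in your $e_1=1$, $f=6$ branch the advertised extra factors $18/17$ and $30/29$ presuppose that the exponents of $17$ and $29$ in $N$ are not $2$. In the cascading sub-case $13^2\mid\mid N$, $17^2\mid\mid N$ the ratio factors degrade to $170/169$ and $290/289$ and the product stalls below $3$; what closes it is the divisibility $5^2\mid\sbu(13^2)\sbu(17^2)\mid 3N$, which forces $5^2\mid N$ and contradicts $e_1=1$ directly (this is exactly how the paper finishes its $e_1\neq 2$ case, after forcing $e_2=e_3=2$ for the primes $13$ and $17$ by ratio bounds). You have the right ingredients here, but as written the bound you rely on does not exceed $3$ in that sub-case.
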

\begin{proof}
Assume that $p_1=5$ divides $N$.
If $e_1\neq 2$, then we must have $f=6$ since otherwise
\begin{equation}
\frac{\sbu(N)}{N}>\frac{\sbu(2^4 3^3 5)}{2^4 3^3 5}=3,
\end{equation}
which is a contradiction.
Noting that $13\mid\sbu(3^6)\mid 3N$, $p_2=13$ must divide $N$, $e_2=2$, $p_3=17$ must divide $N$ and $e_3=2$
since otherwise
\begin{equation}
\frac{\sbu(N)}{N}\geq \frac{\sbu(2^4 3^6 5\times 13)}{2^4 3^6 5\times 13}=\frac{287}{90}>3
\end{equation}
or
\begin{equation}
\frac{\sbu(N)}{N}\geq \frac{\sbu(2^4 3^6 5\times 13^2 17)}{2^4 3^6 5\times 13^2 17}=\frac{41}{13}>3,
\end{equation}
a contradiction again.
Since $5^2\mid \sbu(13^2 17^2)$ and we have assumed that $e_1\neq 2$, we must have $e_1\geq 3$ and
\begin{equation}
\frac{\sbu(N)}{N}\geq \frac{\sbu(2^4 3^6 5^4 13^2)}{2^4 3^6 5^4 13^2}=\frac{4879}{1625}>3,
\end{equation}
which is impossible.
Thus, we must have $e_1=2$ and $p_2=13$ must divide $N$ since $13\mid\sbu(5^2)\mid 3N$.

If $f\geq 9$, then, with the aid of Lemma \ref{lm24} (III),
we see that $\sbu(3^f)$ must have two odd prime factors $p_3$ and $p_4$ in addition to $5$ or $13$,
which is incompatible with $e=4$ by Lemma \ref{a}.

If $f=8$, then $61\mid\sbu(3^f)$ and $p_3=61$ must divide $N$.
Noting that $2^2\mid (3^5+1)\mid \sbu(3^8)$, we must have $2^5\mid \sbu(3^8 5^2 13^{e_2} 61^{e_3})$,
which is a contradiction.
If $f=7$, then we have a similar contradiction from $p_3=41\mid(3^4+1)\mid\sbu(3^f)$.
If $f=6$, then $p_3=41$ must divide $N$ again.
By Lemma \ref{lm24} (V), $\sbu(41^{e_3})$ has an odd prime factor $p_4$ other than $3$, $5$, or $p_3$
and, we must have $2^5\mid \sbu(3^6 5^2 13^{e_2} 41^{e_3} p_4^{e_4})$, which is a contradiction.

If $f=5$, then $7\mid\sbu(3^5)$ and $p_3=7$ must divide $N$.
However, we must have $2^5\mid \sbu(3^5 5^2 13^{e_2} 5^{e_3})\mid 3N$ from Lemma \ref{a}, contrary to $e=4$.
Thus we conclude that $5$ cannot divide $N$.
\end{proof}

Now we prove that we can never have $\sbu(N)=3N$ if $N\neq 2160$ and $e=4$.
By Lemmas \ref{lm51} and \ref{lm52}, we must have $f\geq 5$ and $5$ can never divide $N$.

If $f\geq 18$ or $f=13, 14$, then, By Lemma \ref{lm24} (IV), $\sbu(3^f)$ has at least two odd prime factors $p_1, p_2\geq 127$.
By Lemma \ref{a}, we see that $N$ can have at most one more prime factor $p_3$.
Thus, we must have
\begin{equation}
\frac{\sbu(N)}{N}<\frac{27\times 3\times 7\times 127\times 131}{16\times 2\times 6\times 126\times 130}<3,
\end{equation}
which is a contradiction.

If $15\leq f\leq 17$, then $\sbu(3^f)$ has at least three odd prime factors not less than $31$ and
\begin{equation}
\frac{\sbu(N)}{N}<\frac{27\times 3\times 31\times 37\times 41}{16\times 2\times 30\times 36\times 40}<3,
\end{equation}
which is a contradiction.
If $f=7, 8, 10, 11$, then $5\mid\sbu(3^f)$ and $5$ must divide $N$, which is impossible by Lemma \ref{lm52}.

If $f=6, 12$, then $p_1=13$ and $p_2=41$, $547$ respectively must divide $N$.
By Lemma \ref{lm21}, we see that $\sbu(13^{e_1})$ can have at most one prime factor
other than $2$, $3$, or $p_2$,
which is impossible for $f=6$ and $f=12$ by Lemma \ref{lm24} (IV).
If $f=9$, then, $11\times 61$ must divide $N$ and, noting that $2^2\mid\sbu(3^9)$,
$N$ can have no prime factor except $2$, $3$, $11$, or $61$.
Hence, $\sbu(61^{e_2})$ has no prime factor except $2$, $3$, or $11$, which is impossible
since $61^5+1=2\times 11\times 31\times 1238411$.

Finally, if $f=5$, then $\sbu(3^5)=2^2\times 7\times 13$ and therefore $p_2=7$ and $p_3=13$ must divide $N$.
If $e_3\geq 2$, then $\sbu(13^{e_3})$ has another prime factor $p_4$ in addition to $2$, $3$, and $7$
from Lemma \ref{lm24} (IV) and $2^5\mid\sbu(3^5 7^{e_2} 13^{e_3} p_4^{e_4})\mid 3N$, which is a contradiction.
If $e_3=1$, then we have a similar contradiction that $2^5\mid\sbu(3^5 7)\mid 3N$.

Hence, we can never have $\sbu(N)=3N$ if $N\neq 2160$ and $e=4$.
This completes the proof of Theorem \ref{th1}.

{}
\vskip 12pt
\end{document}